\theoremstyle{plain}
\newtheorem{thm}{Theorem}
\newtheorem{lem}[thm]{Lemma}
\theoremstyle{definition}
\theoremstyle{remark}
\newtheorem{rmk}[thm]{Remark}
\newcommand{\CC}{\mathbf{C}}
\newcommand{\ZZ}{\mathbf{Z}}
\newcommand{\QQ}{\mathbf{Q}}
\begin{document}
\title{Vertex Operator Algebras with two simple modules - the Mathur-Mukhi-Sen Theorem Revisited}
\author{Geoffrey Mason, Kiyokazu Nagatomo and Yuichi Sakai}
\address{Department of Mathematics\\
University of California, Santa Cruz}
\email{gem@ucsc.edu}
\address{Department of Pure and applied Mathematics, Graduate School of Information Science and Technology\\ Osaka University}
\email{nagatomo@math.sci.oswaka-u.ac.jp}
\address{Multiple Zeta Research Center, Kyushu University\\
Motooka 744, Nishi-ku, Fukuoka 819-0395, JAPAN}
\email{e-mail: dynamixaxs@gmail.com}

\thanks{The first author was supported by the Simons Foundation \#427007}
\thanks{The second author was partially supported  by JSPS KAKENHI Grant Number JP17K05171, 
 and by the Max-Planck Institute f\"{u}r Mathematik, Germany and the International Center for Theoretical Physics, Italy.}

\keywords{Strongly regular vertex operator algebra, modular linear differential equation}
\subjclass{Primary 17B69 , Secondary 17C90 }

\begin{abstract}  Let $V$ be a strongly regular vertex operator algebra and  let $\frak{ch}_V$ be the space spanned by the characters of
the irreducible $V$-modules.\ It is known that $\frak{ch}_V$ is the space of solutions of a so-called \emph{modular linear differential equation (MLDE)}.\ In this paper we obtain a near-classification of those $V$ for which the corresponding MLDE
is irreducible and monic of order $2$.\ As a consequence we derive the complete classification when $V$ has exactly two simple modules.\ It turns out that $V$ is either one of four affine Kac-Moody algebras of level $1$, or the Yang-Lee Virasoro model of central charge 
${-}22/5$.\ Our proof establishes new connections between the characters of $V$ and Gauss hypergeometric series, and puts the finishing touches to work of Mathur, Mukhi and Sen who first considered this problem forty years ago.  \\
\end{abstract}
\maketitle
 
 \section{Introduction}
 In a remarkable paper that was ahead of its time~\cite{MMS}, Mathur, Mukhi and Sen put forward the idea 
of classifying two-dimensional conformal field theories according to the differential equation satisfied 
by the characters of the simple modules (primary fields at vacuum). These differential equations, now call called MLDEs 
(\textit{modular linear differential equations}), are polynomials in the so-called Serre derivation with coefficients which are modular forms. (Further details are given below.)
 
 \medskip
 Mathur, Mukhi and Sen pushed through their ideas in the basic case when the MLDE has order two and is \textit{monic}
(leading coefficient 1)  corresponding to some theories containing just two two primary fields. They achieved a classification result in this case, however some ambiguities remained and their methods are mathematically incomplete.

\medskip
The purpose of the present paper is to revisit the classification of Mathur, Mukhi and Sen. Taking advantage of recent advances 
in the theory of MLDEs, in particular the connections with \textit{Gauss hypergeometric series}~\cite{FM1}, and also  the theory of rational vertex operator algebras~\cite{M2}, we obtain a complete result that is mathematically rigorous. In particular,
we give a new description of the characters of the modules of several familiar VOAs in terms of hypergeometric series. 
Our main results are stated below as Main Theorems 1 and 2.  In the rest of the Introduction we give a more detailed discussion 
of our results and methods of proof.
 
 \medskip
 The setting for our results is the theory of \textit{rational vertex operator algebras} (VOAs), and in particular VOAs~$V$
 that are \textit{strongly regular}. Informally, this means that $V$ is well-behaved. 
 An~overview of the theory can be found in~\cite{M2}.  However MLDEs are not treated in~\cite{M2}, and we discuss them here because they figure prominently in the present work.

\medskip
The weight 2 ``Eisenstein'' series is
\begin{eqnarray*}
E_2(\tau):=1-24\sum_{n=1}^{\infty}\sum_{d{\mid}n}dq^n.
\end{eqnarray*}
Here, and below, $\tau$ lies in the complex upper-half plane $H$ and $q:=e^{2\pi i \tau}$
The series $E_2(\tau)$ is  holomorphic
throughout $H$.\ Its main importance for us is its occurrence in the 
differential operators (sometimes called ``Serre'', or ``Ramanujan'', derivatives)
 \begin{eqnarray*}
D_k:= \frac{1}{2\pi i}\frac{d}{d\tau}-\frac{k}{12}E_2(\tau)= q\frac{d}{dq}-\frac{k}{12}E_2(\tau)\quad
(k{\in}\mathbf{Z}).
\end{eqnarray*}
The operator $D_k$ acts on the space $\mathcal{F}$ of holomorphic functions in $H$ 
and in this regard it has a basic invariance property. To describe this, let
$\Gamma:=SL_2(\mathbf{Z})$ be the homogeneous modular group.  For a given $k$, $\Gamma$ acts on the right 
of $\mathcal{F}$ by the $k^{\text{th}}$ \textit{stroke operator}
\begin{eqnarray*}
f|_k\gamma(\tau):= (c\tau+d)^{-k}f(\gamma\tau)\quad\text{for}\quad f\in\mathcal{F}\quad\text{and}\quad
\gamma:=\left(\begin{array}{cc}a & b \\ c & d\end{array}\right)  {\in}\Gamma.
\end{eqnarray*}
Then we have (\cite[Chapter X, \S\,5]{L})
\begin{eqnarray}\label{slashinv}
D_k(f)|_{k+2}\gamma(\tau)=D_k(f|_k\gamma)(\tau).
\end{eqnarray}

An MLDE is a differential equation of the form
\begin{eqnarray}\label{MLDEX}
\left(P_{0}D_k^n+P_{1}D_k^{n-1}+\cdots+P_{n-1}D_k+P_n\right)f=0
\end{eqnarray}
Here,  each $P_j$ is a holomorphic modular form on $\Gamma$ of weight $m-(k+2n-2j){\geq}0$ for some given integer $m$, 
and $D_k^n:=D_{k+2n-2}\circ\cdots\circ D_{k+2}\circ D_k$ (for additional details, see~\cite{MMS} and~\cite{M3}).
For example, the simplest  MLDE of order 2, 
having $n=2, k:=0,m:=4$, is
\begin{eqnarray}\label{ord2}
\left(D_0^2+k_1E_4(\tau)\right)f=0\quad(k_1{\in}\mathbf{C})
\end{eqnarray}
where $E_4(\tau)$ is the standard weight 4 Eisenstein series on $\Gamma$.

Eq.~\eqref{MLDEX} may be rewritten as a~(complicated) traditional linear differential equation involving the derivatives of $f$,
but this will not be useful for us. The formulation~\eqref{MLDEX} together with~\eqref{slashinv} makes it clear that the \
textit{solution space} is invariant under the stroke action $|_m$ of $\Gamma$, and this representation of the modular group 
is the \textit{monodromy} of the MLDE~\cite{FM2}.

Suppose that $V$ is a strongly regular VOA. Let $\frak{ch}_V$ denote the $\mathbf{C}$-linear space spanned by the
$q$-characters of the irreducible $V$-modules.  An important theorem of Zhu (\cite{Z}, \cite{DLM}) states 
that $\frak{ch}_V{\subseteq}\mathcal{F}$, moreover $\frak{ch}_V$ is a $\Gamma$-submodule with respect to 
the zeroth stroke operator $|_0$. Furthermore, we may use the \textit{modular Wronskian} (\cite{M3}, \cite{MMS}) together with Zhu's theorem to show that 
if $\dim\frak{ch}_V=n$ then $\frak{ch}_V$ is the solution space of some MLDE (\ref{MLDEX}) of order $n$.

In this way,  given a strongly regular $V$ with space of characters $\frak{ch}_V$, we obtain some important arithmetic/representation-theoretic data, namely the representation $\rho:\Gamma{\rightarrow}GL(\frak{ch}_V)$, and an MLDE with monodromy $\rho$.
This is related to, though rather different from, the usual $S$- and $T$- matrices of rational conformal field 
theories (RCFTs).

Mathur, Mukhi and Sen proposed to \textit{classify} RCFTs according to the MLDE associated to them. 
They considered in detail the case where the MLDE is~\eqref{ord2} and the monodromy $\rho$ is \textit{irreducible}.
 In particular $\dim\frak{ch}_V=2$.  This means that either
 
 \smallskip\noindent
 (i) $V$ has \textit{exactly two} irreducible modules and they have linearly independent characters, or else\\
 (ii) $V$ has more than two irreducible modules and their characters are \textit{not linearly independent}.

\smallskip
Inasmuch as an irreducible module and its dual have identical characters, the second possibility is commonplace.

\smallskip
In this paper we give a rigorous and full account of the classification of strongly regular VOAS in case (i).
We prove that are exactly five isomorphism classes of such VOAs, under the assumption that the monodromy $\rho$ is irreducible. Most of our analysis also applies to case (ii), although there are some cases (of central charges $c=-6, -8$ and $-10$) 
which remain open.

It is convenient to record here the assumptions and notation relating to $V$ that we will be operating under:
\begin{eqnarray*}
&&\ \ \ \ \ \ -\ \mbox{$V$ is a strongly regular, simple vertex operator algebra of central charge $c$.}\\
&&\ \ \ \  \ \ -\ \mbox{$\frak{ch}_V$ is the space of $q$-characters of the irreducible $V$-modules.}\\
&&(*)\  -\ \mbox{$V$ has an irreducible module $M$ of conformal weight $h$ and  the $q$-characters}\\
&&\ \ \ \ \ \ \  \mbox{$Z_V(\tau):=q^{-c/24}\sum_{n\geq 0}\dim V_nq^n$ and $Z_M(\tau):=q^{h-c/24}\sum_{n\geq 0}\dim M_{h+n}q^n$ span $\frak{ch}_V$}. \\
&&\ \ \ \ \ \ \  -\ \mbox{$\frak{ch}_V$ is the solution space of the order 2, monic MLDE~\eqref{ord2} and the associated}\\
&&\ \ \ \ \ \ \ \  \mbox{monodromy representation $\rho$ is \textit{irreducible}}.
\end{eqnarray*}

In order to describe our main results, we recall (cf~ \cite{M}) that the \textit{Gauss hypergeometric series} is the function
\begin{eqnarray}\label{hypergeo1}
{_2}F_1(a', b', c'; z):=1+\sum_{n\geq 0} \frac{(a')_n(b')_n}{(c')_n}\frac{z^n}{n!},
\end{eqnarray}
where $(a')_n$ is the \textit{Pochhammer symbol} (or rising factorial)
\begin{eqnarray*}
(a')_n:= a'(a'+1)(a'+2)\dotsb (a'+n-1).
\end{eqnarray*}
The series ${_2}F_1(a', b', c'; z)$ converges for all $a', b', c'{\in}\CC$ unless $c'$ is a nonpositive integer.\ It is a solution of the \textit{Gauss hypergeometric differential equation}
 \begin{eqnarray*}
\frac{d^2f}{dz^2}+\frac{(c'-(a'+b'+1)z)}{z(1-z)}\frac{df}{dz}-\frac{a'b'}{z(1-z)}f=0.
\end{eqnarray*}

The following two Theorems are the main results of the paper:
 
\medskip\noindent
 {\bf Main Theorem 1.}\label{thm1} 
 Suppose that $V$ is a vertex operator algebra satisfying the assumptions $(*)$.
 If $c{\geq}0$ then $V$ is isomorphic to one of  seven affine algebras of level~1:
 \begin{eqnarray*}
L_{A_1}(1, 0)), L_{A_2}(1, 0), L_{G_2}(1, 0), L_{F_4}(1, 0), L_{D_4}(1, 0), L_{E_6}(1, 0),  L_{E_7}(1, 0).
\end{eqnarray*}

If $c<0$ then either $V$ is isomorphic to the \textit{Yang-Lee model}, i.e., the discrete series Virasoro algebra $Vir_{c_{2, 5}}$ of central charge $-22/5$; or $V$ is one of a series of (unkown) VOAs of central charge $c=-6, -8$ or $-10$.

In all cases both known and unknown,  $Z_V(\tau)$ and $Z_M(\tau)$ are modular functions of weight~0 on a congruence subgroup
of $SL_2(\ZZ)$ and they may be described (up to an overall scalar) in terms of a~pair of rational numbers $(a, b)$ 
and the  Gauss hypergeometric series as follows:
\begin{equation*}
Z_V(\tau)= K^a\cdot   {_2}F_1(a, a+1/3, 2a+5/6; K),\quad
Z_M(\tau)= K^b\cdot {_2}F_1(b, b+1/3, 2b+5/6; K),
\end{equation*}
where $K$ is the level~1 hauptmodul on $\Gamma$ defined by
\begin{eqnarray*}
K:=\dfrac{1728}{j}:=\dfrac{E_4^3(\tau)-E_6^2(\tau)}{E_4^3(\tau)},
\end{eqnarray*}
according to the cases in Table~\ref{AandB}.

\begin{table}[htbp]
\begin{center}
\begin{tabular}{l|c|c|c}
Type&$a$&$b$&$c$\\
\hline
$A_1$&$-1/24$&$5/24$&$1$\\
$A_2$&$-1/12$&$1/4$ &$2$ \\
$G_2$&$-7/6$&$17/60$&$14/5$\\
$D_4$&$-1/6$&$1/3$&$4$\\
$F_4$&$-13/60$&$23/60$&$26/5$\\
$E_6$&$-1/4$&$5/12$&$6$\\
$E_7$&$-7/24$&$11/24$&$7$\\
$Vir_{c_{2, 5}}$&$11/60$&$-1/60$&$-22/5$\\
$??$&$1/4$&$-1/12$&$-6$\\
$??$&$1/3$&$-1/6$&$-8$\\
$??$&$5/12$&$-1/4$&$-10$\\
\end{tabular}
\end{center}
\caption{Values of $a,b,c$}
\label{AandB}
\end{table}

With a~slightly stronger hypothesis the unknown cases of Main Theorem~\ref{thm1} do \textit{not} exist:

\medskip\noindent
 {\bf Main Theorem 2.}\label{mainthm2} Suppose that $V$ is a~vertex operator algebra satisfying the assumptions $(*)$, 
 and suppose further that (up to isomorphism) $V$ and $M$ are the \textit{only} simple $V$-modules. Then $V$ is
 isomorphic to one of the following five VOAs:
\begin{eqnarray*}
L_{A_1}(1, 0)),\; \, L_{G_2}(1, 0),\;\, L_{F_4}(1, 0),\; \, L_{E_7}(1, 0),\;\, \mathsf{Vir}_{c_{2, 5}}\,.
\end{eqnarray*}

In our approach to the proofs of the Main Theorems, we first show that the~$q$-characters of all irreducible modules 
are \textit{modular functions on a~congruence subgroup}.\ In the present situation we are able to prove this famous modular-invariance
result based on recent advances in the theory of MLDEs~\cite{FM1}.\ We then closely consider the MLDE~\eqref{MLDEX}: we use 
a~ detailed knowledge of 2-dimensional congruence representations of $\Gamma$~\cite{M1} to show that there are only~9 possibilities for the monodromy~$\rho$.\ The description of the solutions of the MLDE in terms of Gauss hypergeometric series 
was given in~\cite{FM1}, and this result is fundamental to our approach.\ We use it to show that there are only \textit{finitely many} 
(14 in fact) possible values of the central charge $c$ (and the \textit{effective central charge} $\tilde{c}$) for a~VOA satisfying the assumptions of the Main Theorems.\footnote{This finiteness result is somewhat surprising because, for example, 
there is no analogous result in dimension~3: there are \textit{infinitely many} strongly regular VOAs with $\dim\frak{ch}_V=3$, 
and their $c$-values are unbounded.} These are listed in Table~\ref{Table:residual}.\ Our task is then to classify the VOAs according to this data.\ We use a number of classification results in the literature (summarized in Theorem~\ref{thmomni}) to show
that of the fourteen possible sets of data, some \textit{cannot} correspond to a~VOA, while others characterize the VOAs uniquely.

All of these results are obtained in Section~\ref{SPMT1} under the assumptions of Main Theorem~\ref{thm1},  
in particular $V$ may have more than two simple modules, although $\frak{ch}_V$ is always assumed to have dimension~2.\ However there are three values of $c$ which we cannot handle by these methods.\ To deal with these residual cases we must assume that $V$ has exactly two irreducible modules.\ The reason for this is that we can then use our modular-invariance result 
to \textit{explicitly identify} the $q$-characters as modular functions, and in particular we can write down the explicit $S$-matrix using known transformation laws for the modular functions in question.\ This is carried out in Section~\ref{SPMT2}.\ In each case we obtain 
the curious contradiction that the $S$-matrix is \textit{not symmetric}, thereby contradicting a basic fact of RCFT~\cite{Hu}, and then Main Theorem~\ref{mainthm2} is a~consequence.

\section{Proof of Main Theorem 1}\label{SPMT1}
In this Section we discuss the proof of  Main Theorem $1$.

\subsection{Modularity}\label{Modularity}
 In this Subsection we do \textit{not} need to assume that $\rho$ is irreducible. We will prove
 
\begin{thm}\label{thmmodular} 
Let $V$ be a~vertex operator algebra satisfying the conditions $(*)$ and let
$\rho{:}\Gamma{\rightarrow} GL(\frak{ch}_V)$ be the representation of $\Gamma$ furnished by the zeroth stroke action $|_0$.\ Then $\rho$ is \textit{modular}, i.e., $\ker\rho$ is a \textit{congruence subgroup} of $\Gamma$.\
In particular, both $Z_V(\tau)$ and $Z_M(\tau)$ are modular functions of weight~0 on a congruence subgroup of $SL_2(\ZZ)$.
\end{thm}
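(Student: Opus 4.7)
The plan is to deduce the modularity of $\rho$ from the explicit hypergeometric description of the solutions to the monic order-2 MLDE \eqref{ord2}, developed by Franc-Mason in \cite{FM1}, combined with basic rationality properties of strongly regular VOAs.

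First I invoke the standard fact that for a strongly regular VOA both the central charge $c$ and the conformal weight $h$ of any simple module lie in $\QQ$. Consequently the eigenvalues $e^{-2\pi i c/24}$ and $e^{2\pi i(h-c/24)}$ of $\rho(T)$ on the basis $\{Z_V,Z_M\}$ are roots of unity, so $\rho(T)$ has finite order; and reading the indicial equation of \eqref{ord2} at the cusp identifies the two exponents as $a=-c/24$ and $b=h-c/24$.

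Next I apply the main result of \cite{FM1}, which uses the change of variables $\tau\mapsto K(\tau)=1728/j$ to convert \eqref{ord2} into a Gauss hypergeometric equation. A direct computation shows that the exponent differences of this equation at $K=1$ and $K=\infty$ are $1/2$ and $1/3$, matching the orders of the elliptic points $i$ and $e^{2\pi i/3}$ of $\Gamma$; this is precisely the matching that allows the hypergeometric monodromy to pull back cleanly along $K$ to a representation of $\Gamma$, namely $\rho$. The characters take the explicit form $Z_V=K^a\cdot{}_2F_1(a,a+\tfrac{1}{3},2a+\tfrac{5}{6};K)$ and similarly for $Z_M$ with $a$ replaced by $b$.

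With $\rho$ realized in hypergeometric form with rational parameters of the special shape $(a,a+\tfrac{1}{3},2a+\tfrac{5}{6})$, I conclude by appealing to the congruence-monodromy analysis of \cite{FM1} together with the classification of 2-dimensional congruence representations of $\Gamma$ in \cite{M1}: the denominator of $a$ controls the order of $\rho(T)$, the local monodromies at $\tau=i$ and $\tau=e^{2\pi i/3}$ have orders $2$ and $3$ respectively, and a direct computation with the classical connection matrices for ${}_2F_1$ identifies the group generated by these local monodromies as a congruence subgroup of $\Gamma$. Hence $\ker\rho$ is congruence and both $Z_V$ and $Z_M$ are modular functions of weight~0 on a congruence subgroup of $SL_2(\ZZ)$, as required. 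The main anticipated obstacle is this last step: hypergeometric monodromy with rational parameters can in general fail to be congruence, and the argument leans crucially on the restricted shape of the exponents forced by the particular form \eqref{ord2} of the MLDE together with the sharp analysis of \cite{FM1}.
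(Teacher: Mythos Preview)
Your proposal has a genuine gap in the final step. You deduce from strong regularity only that $c,h\in\QQ$, hence that the exponents $a=-c/24$ and $b=h-c/24$ are rational and $\rho(T)$ has finite order. You then try to conclude congruence from the hypergeometric form of the monodromy with these rational parameters. But rationality of $a,b$ together with the constraint $a+b=1/6$ coming from~\eqref{ord2} is \emph{not} sufficient: there is a one-parameter family of $2$-dimensional irreducible representations of $\Gamma$ with $\det\rho(T)=e^{2\pi i/6}$, and among those with $\rho(T)$ of finite order only finitely many (the nine listed later in Lemma~\ref{lemTexps}) are congruence. So the ``direct computation with the classical connection matrices'' you invoke cannot succeed at this level of generality, and your own caveat in the last sentence is exactly the obstruction. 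You have not used any property of $V$ beyond $c,h\in\QQ$, and that is too weak.

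The paper's argument supplies the missing ingredient: the Fourier coefficients of $Z_V$ and $Z_M$ are dimensions of vector spaces, hence nonnegative \emph{integers}. One then forms the vector $W=(Z_V,Z_M)^t$, multiplies by a suitable power of $\Delta$ to obtain a holomorphic vector-valued modular form $F\in M_k(\rho)$ with integral (in particular, bounded-denominator) Fourier coefficients, and invokes Theorem~1.2 of~\cite{FM1}: in dimension~$2$, the existence of a single nonzero holomorphic vector-valued modular form with bounded denominators forces $\rho$ to be modular. The hypergeometric machinery you describe is indeed what underlies the \emph{proof} of that theorem in~\cite{FM1}, but the input one feeds into it is integrality of coefficients, not merely rationality of exponents.
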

\begin{proof} 
Let $M(\rho)$ denote the space of holomorphic vector-valued modular forms corresponding to $\rho$.
This space is naturally $\ZZ$-graded by weight~$k$:
 \begin{eqnarray*}
M(\rho)=\bigoplus_{k\in\ZZ} M_k(\rho).
\end{eqnarray*}

We assert that there is $F(\tau){\in} M_k(\rho)$ for some integral weight $k$ such that $F(\tau)$ has \textit{bounded denominators}. Indeed, we may take
$F(\tau):=\Delta(\tau)^k W(\tau)$ for some $k$, where 
\begin{eqnarray*}
W(\tau):=\left(\begin{array}{c}Z_V(\tau) \\ Z_M(\tau)\end{array}\right)
\end{eqnarray*}
is the meromorphic vector-valued modular form defined by $V$.\ (There are no poles in $H$, but there may be poles at the cusps.)\
This assertion 
follows because $W(\tau)$ has integral Fourier coefficients, therefore the same is true for $F(\tau)$.\ And by choosing $k$ large enough we can ensure that $F(\tau)$ is holomorphic at the cusps, hence it is a~holomorphic vector-valued modular form.

\medskip
Now we may  apply Theorem~1.2 of~\cite{FM1}, which says that if $M(\rho)$ 
contains a~single nonzero vector-valued modular form with bounded denominators, then $\rho$ is modular.
The statement of the Theorem follows.
 \end{proof}
 
 \begin{rmk}\label{rem1}
 There are exactly 54 equivalence classes of two-dimensional \textit{irreducible} representations $\rho$ that satisfy the conclusions 
 of Theorem~\ref{thmmodular}. They are explicitly listed in Tables 1--5 \cite{M1}.
 \end{rmk}
 
 \subsection{The monic MLDE}
 In this Subsection we take up consideration of the MLDE~\eqref{ord2} which has $\frak{ch}_V$ as its solution space.
 It will be convenient to deal with the \textit{normalized} vector-valued modular form of weight~0
\begin{eqnarray*}
W_0(\tau):=\left(\begin{array}{c} f_1 \\ f_2\end{array}\right),
\end{eqnarray*}
whose components comprise a~fundamental system of solutions of the MLDE. Thus
\begin{eqnarray*}
f_1(\tau){:=}Z_V(\tau){=}q^a{+}\cdots, \quad f_2(\tau){:=}(\dim M_h)^{-1}Z_M(\tau){=}q^b{+}\cdots,
\end{eqnarray*}
where $a$, $b$ are rational numbers satisfying $a{:=}{-}c/24$ and $ b{:=}h{-}c/24$.

\begin{lem}\label{lemindroots} 
We have $a{+}b{=}1/6$ and $ab{=}{-}k_1$.
\end{lem}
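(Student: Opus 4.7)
The strategy is the classical Frobenius indicial argument applied to the second-order ODE \eqref{ord2} at its regular singular point $q = 0$. Substituting each of the two power-series solutions $f_i = q^{x_i}(1 + O(q))$ and extracting the leading $q^{x_i}$-coefficient produces a quadratic polynomial whose roots are precisely $a$ and $b$; the claimed identities are then immediate from Vieta's formulas.

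First I would compute the action of $D_0^2$ on a term of the form $q^x$. Recalling that $D_0 = q\frac{d}{dq}$ and that by definition $D_0^2 = D_2 \circ D_0$ with $D_2 g = q\frac{d}{dq} g - \frac{1}{6} E_2(\tau) g$, and using that $E_2(\tau)$ and $E_4(\tau)$ both have constant term $1$, one gets
\[
D_0^2\bigl(q^x + O(q^{x+1})\bigr) = \bigl(x^2 - \tfrac{x}{6}\bigr) q^x + O(q^{x+1}),
\qquad
k_1 E_4(\tau)\bigl(q^x + O(q^{x+1})\bigr) = k_1 q^x + O(q^{x+1}).
\]
Substituting $f = q^x + O(q^{x+1})$ into \eqref{ord2} and equating the coefficient of $q^x$ to zero thus yields the indicial equation
\[
x^2 - \tfrac{1}{6}\,x + k_1 = 0.
\]

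Next I would argue that $a$ and $b$ are the two \emph{distinct} roots of this quadratic. They are both roots because $f_1 = q^a + \cdots$ and $f_2 = q^b + \cdots$ are solutions of \eqref{ord2}. They must be distinct, for otherwise the Frobenius theory at the regular singular point $q = 0$ would force one of the two independent local solutions to contain a $\log q$ term, in direct contradiction with the hypothesis $(*)$ that both $Z_V(\tau)$ and $Z_M(\tau)$ are honest $q$-series. Vieta's formulas applied to $x^2 - x/6 + k_1$ then immediately give $a + b = 1/6$ together with the asserted expression for $ab$ in terms of $k_1$.

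The calculation is essentially routine; the only point requiring a little care is to track the contribution of the $-\frac{1}{6}E_2$ term inside $D_2$, since it is exactly this term which produces the linear coefficient $-\frac{1}{6}$ in the indicial polynomial (and hence the specific value $\frac{1}{6}$ for the sum $a+b$). One must also be mindful of the sign convention adopted in writing \eqref{ord2}, as the sign in the constant term of the indicial equation determines whether $ab$ equals $k_1$ or $-k_1$.
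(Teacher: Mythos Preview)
Your approach is exactly the paper's: derive the indicial equation at the regular singular point $q=0$ and read off the elementary symmetric functions of the two exponents. Your computation is correct, and you were right to flag the sign: from \eqref{ord2} as written, $(D_0^2+k_1E_4)f=0$, one indeed obtains the indicial polynomial $x^2-\tfrac{1}{6}x+k_1$, hence $ab=k_1$; the paper's stated equation $x^2-\tfrac{1}{6}x-k_1=0$ (and the conclusion $ab=-k_1$) matches instead the rewritten form \eqref{MLDE1}, which carries the opposite sign on the $E_4$ term. This is a harmless internal sign inconsistency in the paper; the identity $a+b=1/6$, which is all that is actually used later, is unaffected.
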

\begin{proof} 
We know~\cite{M3} that $q{=}0$ is a~regular singular point of~\eqref{ord2}, 
and that the corresponding indicial roots are $a$ and $b$.\ The indicial equation is easily found~\cite{M3} to be 
$x^2-x/6-k_1{=}0$, and the Lemma follows.\ Actually, our main need will be the formula  $a+b{=}1/6$, which also follows immediately from the
modular Wronskian argument, cf.~\cite{M3}, Theorems~3.7 and~4.3.
\end{proof}

Let us set
\begin{eqnarray*}
\rho(T){:=}\left(\begin{array}{cc} e^{2\pi im_1} & 0 \\0 & e^{2\pi i m_2}\end{array}\right)\quad\text{with}\quad
0\leq m_j<1.
\end{eqnarray*}
We know by Theorem~\ref{thmmodular} that $\rho$ has finite image, and in particular $\rho(T)$ has finite order.
This implies that $m_1, m_2{\in}\QQ$. Moreover $a{\equiv}m_1, b{\equiv}m_2\pmod{\ZZ}$.

\begin{lem}\label{lemTexps} We have $m_1{+}m_2{=}7/6$.\
There are just~9  possibilities for  the \textup{(}unordered\textup{)} pair $\{m_1,\, m_2\}$ as follows:
\begin{table}[htp]
\begin{center}
\begin{tabular}{c|c|c|c|c|c|c|c|c|c|c|c|c|c|c|c|c}
$\{m_1,\, m_2\}$&$\{5/6, 1/3\}$&$\{3/4, 5/12\}$&$\{11/12, 1/4\}$&$\{23/24, 5/24\}$\\
\hline
$\{17/24, 11/24\}$&$\{53/60, 17/60\}$&$\{47/60, 23/60\}$&$\{41/60, 29/60\}$&$\{59/60, 11/60\}$
\end{tabular}
\end{center}
\caption{Values of $m_1$ and $m_2$}
\label{default}
\end{table}
\end{lem}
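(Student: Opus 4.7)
The plan is to combine Lemma~\ref{lemindroots} with the modularity conclusion of Theorem~\ref{thmmodular} and the explicit enumeration of 2-dimensional irreducible modular representations of $\Gamma$ recalled in Remark~\ref{rem1}. Everything here is congruence-theoretic bookkeeping once the indicial equation and modularity are in hand; there is no further VOA-side input needed.

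First I would pin down the admissible values of $m_1+m_2$ purely from Lemma~\ref{lemindroots}. Since $a\equiv m_1\pmod{\ZZ}$, $b\equiv m_2\pmod{\ZZ}$, and $a+b=1/6$, we may write $a=n_1+m_1$, $b=n_2+m_2$ with $n_j\in\ZZ$ and $m_j\in[0,1)$, so that $m_1+m_2=1/6-(n_1+n_2)$. The constraint $0\le m_1+m_2<2$ leaves precisely two candidates, $m_1+m_2\in\{1/6,\,7/6\}$.

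Next I would invoke Remark~\ref{rem1}: the irreducibility hypothesis on $\rho$, together with Theorem~\ref{thmmodular}, forces $\rho$ into one of the $54$ explicit equivalence classes of 2-dimensional irreducible modular representations of $\Gamma$ tabulated in \cite{M1}. Each such class comes with $\rho(T)$ in fully explicit diagonal form $\mathrm{diag}(e^{2\pi i m_1},e^{2\pi i m_2})$, so one simply scans the $54$ pairs $\{m_1,m_2\}$ and retains those satisfying $m_1+m_2\equiv 1/6\pmod{\ZZ}$. The outcome of this finite check is that exactly $9$ unordered pairs survive, and in every one of them the actual sum is $7/6$, never $1/6$. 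Those $9$ pairs are precisely the entries of Table~\ref{default}, and this simultaneously proves $m_1+m_2=7/6$ and produces the enumeration.

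The main obstacle is not conceptual but a careful sift of the $54$ cases in \cite{M1}; the cleanest way is to first list the four possibilities for the finite order $N$ of $\rho(T)$ coming from those tables, then tabulate which primitive $N$-th roots of unity can pair up to sum $1/6\pmod{\ZZ}$. A small subtlety worth flagging along the way is the degenerate possibility $m_1=m_2=1/12$, which is the only way a $1/6$-sum could arise with equal exponents; this corresponds to a scalar $\rho(T)$ and is ruled out simply by the absence of such an entry in \cite{M1}'s classification of irreducible 2-dimensional modular $\rho$.
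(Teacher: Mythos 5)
Your proposal is correct and follows essentially the same route as the paper: derive $m_1+m_2\in\{1/6,\,7/6\}$ from Lemma~\ref{lemindroots} and the normalization $0\le m_j<1$, then sift the $54$ irreducible two-dimensional modular representations tabulated in \cite{M1} to find that the sum $1/6$ never occurs and exactly nine classes realize $7/6$. The extra remarks about organizing the finite check by the order of $\rho(T)$ and ruling out the scalar case $m_1=m_2=1/12$ are sensible implementation details but do not change the argument.
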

\begin{proof} 
Since $a{+}b{\equiv} m_1{+}m_2{\pmod\ZZ}$ and $0{\leq} m_1{+}m_2{<}2$,
after Lemma~\ref{lemindroots} the only possibilities are $m_1{+}m_2{=}1/6$ or $7/6$.\ On the other hand, by Remark~\ref{rem1} 
there are just 54~isomorphism classes of irreducible $\rho$ (irreducibility of $\rho$ is one of our hypotheses), 
and they are uniquely determined by the pair $\{m_1,\, m_2\}$.\ Indeed, Tables 1--5 in~\cite{M1} list all 54~possibilities, 
and we observe from these Tables that the case $m_1{+}m_2{=}1/6$ never occurs and that there are just nine choices of 
$\rho$ with $m_1{+}m_2{=}7/6$. The corresponding pairs $\{m_1,\, m_2\}$ are as listed, and the Lemma is proved.
\end{proof}

\subsection{Hypergeometric series}\label{HyperG}
In this Subsection we show that $Z_V(\tau)$ and $Z_M(\tau)$ are given by hypergeometric series evaluated 
at the level 1~hauptmodul~$K$, as in the statement of  Main Theorem~\ref{thm1}.\ We follow the arguments of~\cite{FM1}.\  
First rewrite~\eqref{ord2} as follows:\
\begin{eqnarray}\label{MLDE1}
\theta^2(f)-\tfrac{1}{6}E_2\theta(f)-k_1E_4f=0,
\end{eqnarray}
where 
\begin{eqnarray*}
\theta:=q \frac{d}{dq}.
\end{eqnarray*}
We then switch variables, from $q$ to $j$.\  As computed in~\cite{FM1}, we obtain
\begin{eqnarray}\label{MLDE2}
\frac{d^2f}{dj^2}+\frac{7j-4\cdot1728}{6j(j-1728)}\frac{df}{dj}{-} \frac{k_1}{j(j{-}1728)}f{=}0,
\end{eqnarray}
which is nothing but the Gauss normal form (cf.~\cite{M})
\begin{eqnarray}\label{GNF}
\frac{d^2f}{dJ^2}{+}\frac{C{-}(A{+}B{+}1)J}{J(1{-}J)}\frac{df}{dJ}{-}\frac{AB}{J(1{-}J)}f{=}0,
\end{eqnarray}
with
\begin{eqnarray*}
J{=}K^{-1},\;\,  C{=}\frac{2}{3},\; \, A{+}B{=}\frac{1}{6},\;\,  AB{=}{-}k_1{=}ab.
\end{eqnarray*}

Note that $A$, $B$ satisfy the same equations as $a$, $b$ (Lemma~\ref{lemindroots}), so that we may, and shall, take
$A{=}a, B{=}b$. The pair of fundamental solutions of~\eqref{GNF}) at $\infty$ are then the hypergeometric series
\begin{equation}\label{hypergeo2}
f_1{:=} K^a \cdot {_2}F_1(a, a{+}1/3, 2a{+}5/6; K),\quad 
f_2{:=} K^b\cdot {_2}F_1(b, b{+}1/3, 2b{+}5/6; K).
\end{equation}

\subsection{Bounds for $c$ and $m$ }
In this Subsection we show that there are \textit{only a finite number of possibilities} for the central charge $c$ of $V$ 
and the integer $m$ defined to be the dimension of the first nontrivial graded piece $V_1$ of $V$.\ 
To achieve this we will use the description~\eqref{hypergeo2} of $Z_V(\tau)$ and $Z_M(\tau)$ as a~hypergeometric series.\
 
 \medskip
We continue with previous notation, so that $a{=}{-}c/24$, $b{=}h{-}c/24$ and 
\begin{equation*}
f_1{=}Z_V(\tau){=}q^a{+}\cdots,\quad f_2{=}Z_M(\tau){=}q^b{+}\cdots
\end{equation*} 
(up to an overall integral scalar).\ Using the hypergeometric description~\eqref{hypergeo2} and the explicit formula~\eqref{hypergeo1} we find that, up to an overall scalar,
\begin{equation*}
\begin{split}
&f_1(\tau){\sim}\left(12^3q(1-744q+356652q^2+\cdots)\right)^a\\
&\times\left\{1+\frac{12^3a(a+1/3)}{2a+5/6}q+\left(-\frac{12^3\cdot744a(a+1/3)}{2a+5/6}
+\frac{12^6a(a+1)(a+1/3)(a+4/3)}{2(2a+5/6)(2a+11/6)}\right)q^2+\cdots\right\}.
\end{split}
\end{equation*}
 
\begin{rmk} 
This series \textit{does} converge.\  Indeed, it will converge as long as $2a+5/6$  is not a~nonpositive integer, and this follows from Lemma~\ref{lemTexps}. 
 \end{rmk}

To write the first factor as a~$q$-expansion, we use Newton's binomial expansion
\begin{eqnarray*}
(1+X)^a= \sum_{k=0}^{\infty} {a\choose k} X^k
\end{eqnarray*}
with $X:=-744q+356652q^2+\cdots$ to obtain
\begin{equation*}
\begin{split}
 f_1&= q^a\left\{1-744aq+\left(356652a+\frac{744^2a(a-1)}{2}\right)q^2+\cdots\right\}\\
 &\;\times\left\{1+\frac{12^3a(a+1/3)}{2a+5/6}q+\left(-\frac{12^3\cdot744a(a+1/3)}{2a+5/6}+\frac{12^6a(a+1)(a+1/3)(a+4/3)}{2(2a+5/6)(2a+11/6)}\right)q^2+\cdots\right\}\\
 &=q^a\left\{1+24a\left(\frac{(6a+2)}{12a+5}72-31\right)q+\cdots\right\}
  \end{split}
 \end{equation*}
The first nontrivial coefficient of $f_1$ is therefore
 \begin{eqnarray*}
 \dim V_1{=:}m{=}{-}c\left(\frac{(8{-}c)}{(10{-}c)}36{-}31\right){=}\frac{c(5c{+}22)}{10{-}c}.
 \end{eqnarray*}
 
This formula is known.  It appears, for example, on P.~368 of~\cite{TV}, where it also arises from consideration 
of the MLDE~\eqref{ord2}, but instead of hypergeometric series Tuite and Van use special properties of the \textit{exceptional} VOAs 
that they are studying.
 
 \medskip
The previous display is equivalent to $5c^2+(22+m)c-10m=0$  (note that $c{=}{-}24a{\neq}10$),
so $c{=} \left({-}(22{+}m)\pm\sqrt{m^2{+}484{+}244m}\right)/10$.

\medskip
Because $V$ is strongly regular then $c{\in}\QQ$ (see  \cite{DLM}), so there is an integer $s$ such that
$s^2{=}m^2{+}244m{+}484{=}(m{+}122)^2{-}120^2$.
Thus
\begin{eqnarray}\label{triple}
&&s^2+120^2=(m+122)^2
\end{eqnarray}
and the solutions correspond to  \textit{Pythagorean triples} $(s, 120, m+122)$. (A~triple of integers that may serve as lengths of sides of a~(possibly degenerate) right triangle.)

\medskip
There is an~old and well-known algorithm (Euclid) that gives a parameterization of all Pythagorean triples.\  In our case there are only finitely many nonnegative integral pairs $(s, m)$ that solve~\eqref{triple}, and we may use Euclid's algorithm to readily find them all.\ They are set out in Table~\ref{Table:smca}.\ We content ourselves by listing the resulting pairs.\
We also list the corresponding pairs of possible values of $c=-(m+22)\pm s)/10$ and $a=-c/24$, which we will need.
 
\begin{table}[hbtp]
\begin{center}
\begin{tabular}{c|c|c|clc}
\hline
$s$&$m$&$c$&$a$\\
\hline
$3599$&$3479$&$-710$,\, $49/5$&$355/12$,$-49/120$\\
$896$&$782$&$ -170$,\, $46/5$&$  85/12$,\, $-23/60$\\
$391$&$287 $&$-70 ,\,41/5$&$35/12$,\, $-41/120$\\
$209$&$119$&$ -35$,\, $34/5$&$ 35/24$,\, $-17/60$\\
$119$&$47 $&$ -94/5$,\, $5$&$47/60$,\, $-5/24$\\
$64$&$14$&$-10$,\, $14/5$&$5/12$,\, $-7/60$\\
$1798$&$1680 $&$ -350$,\, $48/5$&$ 175/12$,\, $-2/5$\\
$442$&$336$&$-80$,\, $ 42/5$&$10/3$,\, $-7/20$\\
$182$&$96$&$-30$,\, $32/5 $&$ 5/4$,\, $-4/15$\\
$22$&$ 0$&$0$,\, $-22/5$&$0$,\, $11/60$\\
$1197$&$1081$&$ -230$,\, $47/5$&$115/12,\,-47/120$\\
$288$&$190$&$  -50$,\, $38/5$&$ 25/12$,\, $-19/60$\\
$27 $&$1$&$ -5$,\, $2/5$&$5/24$,\, $-1/60$\\
$715$&$ 603$&$  -134 ,\,9$&$ 67/12$,\, $-3/8$\\
$35$&$ 3 $&$ -6$,\, $1$&$ 1/4$,\, $-1/24$\\
$594$&$ 484$&$   -110$,\, $44/5$&$ 55/12$,\, $-11/30$\\
$126$&$52$&$ -20,\,26/5$&$ 5/6$,\, $-13/60$\\
$350$&$248$&$-62$,\, $8$&$  31/12$,\, $-1/3$\\
$50 $&$8$&$ -8$,\, $2$&$1/3$,\, $-1/12$\\
$225$&$33$&$ -38$,\, $7$&$19/12$,\, $-7/24$\\
$160$&$78$&$-26$,\, $6$&$13/12$,\, $-1/4$\\
$0$&$ 28$&$ -14$,\, $4$&$7/12$,\, $-1/6$
\end{tabular}
\end{center}
\caption{Values of $s$, $m$, $c$ and $a$}
\label{Table:smca}
\end{table}

We now compare the values of $a$ in the fourth column of Table~\ref{Table:smca} with the values of $m_j$ in Lemma~\ref{lemTexps}. For we know that there is an~index $j$ such that $a{\equiv}m_j\pmod\ZZ$.  A~number of values of~$a$ do not survive this test, and those that do are listed in Table~\ref{Table:smca2}.

\begin{table}[hbtp]
\begin{center}
\begin{tabular}{c|c|c|clc}
\hline
$s$&$m$&$c$&$a$\\
\hline
$391$&$ 287  $&$-70$&$35/12$\\
 $ 209$&$  119  $&$-35 $&$  35/24$\\
 $119$&$ 47 $&$ -94/5 $&$47/60$\\
$ 64$&$ 14 $&$ -10, 14/5$&$  5/12,-7/60$\\
$ 442$&$  336 $&$-80 $&$ 10/3$\\
$ 182$&$   96 $&$ -30$&$  5/4$\\
$ 22$&$0$&$-22/5$&$ 11/60$&$ $\\
$ 288$&$  190  $&$38/5 $&$ -19/60$\\
 $27$&$1$&$-5, 2/5$&$5/24,-1/60$\\
$ 35  $&$ 3 $&$ -6, 1$&$ 1/4,-1/24$\\
$ 126$&$ 52 $&$ 26/5$&$-13/60  $\\
$ 50  $&$  8 $&$ -8, 2 $&$ 1/3,-1/12  $\\
$ 225  $&$ 133 $&$ 7$&$-7/24 $\\
$ 160$&$  78 $&$  6 $&$ -1/4$\\
$  90$&$ \ 28 $&$   4$&$  -1/6$\\
$126$&$52$&$-20$& $5/6$
\end{tabular}
\end{center}
\caption{Values of $s$, $m$, $c$ and $a$}
\label{Table:smca2}
\end{table}

Next we record, for each $a$-value in Table~\ref{Table:Expansion} an~initial segment of the $q$-expansion 
of $f_1{=}K^a\cdot{}_2F_1(a, a{+}1/3, 2a{+}5/6; K)$.\ These can be found in Table~\ref{Table:Expansion}.
\renewcommand{\arraystretch}{1.5}
\begin{table}[htp]
\caption{$q$-expansion of $f_1$}
\begin{center}
\begin{tabular}{c|l}
$a$&\qquad\qquad\qquad\qquad\qquad\qquad\qquad\qquad$f_1$\\
\hline
$35/12$&$q^{35/12}(1+287q+\frac{847903}{23}q^2+\cdots)$\\
$35/24$&$q^{35/24}(1+119q+\frac{113358}{19}q^2+\cdots)$\\
$47/60$&$q^{47/60}(1+47q+\frac{15369}{17}q^2+\cdots)$\\
$5/12$&$q^{5/12}(1+14q+92q^2+456q^3+1848q^4+6580q^5+21141q^6+62806q^6+174777q^7+\cdots)$\\
$-7/60$&$q^{-7/60}(1+14q+42q^2+140q^3+350q^4+840q^5+1827q^6+3858q^7+7637q^8+\cdots)$\\
$10/3$&$q^{10/3}\left(1 + 336 q + \frac{868136}{17} q^2) + \frac{1541266112}{323} q^3) 
+ \frac{5323642484}{17} q^4 + \frac{264979509920}{17} q^5+\cdots\right)$\\
$5/4$&$q^{5/4}\left(1+96q+\frac{49869}{13}q^2+\cdots\right)$\\
$5/6$&$q^{-6/5}(1+1292 q+701246 q^2+207599288 q^3+36592296829 q^4+3988939885028 q^5+\cdots)$\\
$11/60$&$q^{11/60}(1+q^2+q^3+q^4+q^5+2q^6+2q^7+3q^8+\cdots)$\\
$-19/60$&$q^{-19/60}(1+190q+2831q^2+22306q^3+129276q^4+611724q^5+2511667q^6+\cdots)$\\
$5/24$&$q^{5/24}(1+q+3q^2+4q^3+7q^4+10q^5+17q^6+23q^7+35q^8+\cdots)$\\
$-1/60$&$q^{-1/60}(1+q+q^2+q^3+2q^4+2q^5+3q^6+3q^7+4q^8+\cdots)$\\
$1/4$&$q^{1/4}(1+3q+9q^2+19q^3+42q^4+81q^5+155q^6+276q^6+486q^7+\cdots)$\\
$-1/24$&$q^{-1/24}\left(1+3 q+4 q^2+7 q^3+13 q^4+19 q^5+29 q^6+43 q^7+62 q^8+\cdots\right)$\\
$-13/60$&$q^{-13/60}(1+52q+377q^3q^2+1976q^3+7852q^4+27404q^5+84981q^6+243230q^7+\cdots$)\\
$1/3$&$q^{1/3}(1+8q+36q^2+128q^3+394q^4+1088q^5+2776q^6+6656q^7+15155q^8+\cdots\cdots)$\\
$-1/12$&$q^{-1/12}(1+8q+17q^2+46q^3+98q^4+198q^5+371q^6+692q^7+1205q^8+\cdots)$\\
$-7/24$&$q^{-7/24}(1+133q+1673q^2+11914q^3+63252q^4+278313q^5+1070006q^6+\cdots$\\
$-1/4$&$q^{-1/4}(1+78q+729q^2+4382q^3+19917q^4+77274q^5+264664q^6+827388q^7+\cdots)$\\
$ -1/6$&$q^{-1/6}(1+28q+134q^2+568q^3+1809q^4+5316q^5+13990q^6+34696q^7+\cdots)$
\end{tabular}
\end{center}
\label{Table:Expansion}
\end{table}%
\renewcommand{\arraystretch}{1.0}

\medskip
Thus the cases $a{=}35/12,\, 35/24,\, 47/60, \, 5/4, \,10/3$ are eliminated because then $f_1$ has coefficients 
that are \textit{not} integers.  On the other hand, in the case $a{=}5/6$ we find that (up to an overall scalar) we have
\begin{eqnarray*}
f_2{\sim} q^{-2/3}(1{-}272q{-}34696q^2{-}1058368q^3{-}\hdots)
\end{eqnarray*}
so that this possibility is eliminated on account of the negative coefficients.\ What remains is the list of possibilities 
in Table~\ref{Table:residual}, where we also include the corresponding values of $b$ and the \textit{effective central charge}~$\tilde{c}$.\ This invariant is discussed in Subsection~\ref{SStildec}, and calculated using Lemma~\ref{lemtildec}.\ 
(Consideration of $f_2$ as in the case $a{=}5/6$ does not yield any useful information in these cases.)

\begin{table}[htbp]
\caption{Residual possibilities}
\begin{center}
\begin{tabular}{c|c|c|c|c|c}
$s$&$m$&$c$&$a$&$b$&$\tilde{c}$\\
\hline
$64$&$14$&$ -10, 14/5$&$5/12$,\,$-7/60$&$ -1/4, 17/60$&$6$, \,$14/5$\\
$22$&$0$&$-22/5$&$11/60$&$-1/60$&$2/5$\\
$288$&$190$&$38/5$&$-19/60$&$29/60$&$38/5$\\
$27$&$1$&$-5$,\,$2/5$&$5/24$,\, $-1/60$&$-1/24$, \,$11/60$&$1$, \,$2/5$\\
$35$&$3$&$-6$,\, $1$&$1/4$, \,$-1/24$&$-1/12$, \,$5/24$&$2$,\, $1$ \\
$126$&$52$&$26/5$&$-13/60$&$23/60$&$26/5$\\
$50$&$8$&$-8$, \,$2$&$1/3$, $-1/12$&$ -1/6$,\,$ 1/4$&$4$, \,$2$ \\
$225$&$133$&$7$&$-7/24$&$11/24$&$7$\\
$160$&$78$&$6$&$-1/4\ $&$ 5/12$&$6$\\
$90$&$28$&$4$&$ -1/6$&$1/3$&$4$
\end{tabular}
\end{center}
\label{Table:residual}
\end{table}

\subsection{The effective central charge $\tilde{c}$}\label{SStildec}
In this Subsection we will show that some additional cases listed in Table~\ref{Table:residual} \textit{cannot} correspond to 
strongly regular VOAs.\ To do this we make use of the \textit{effective central charge} $\tilde{c}$ of $V$ defined as follows:\
\begin{eqnarray*}
\tilde{c}{:=}c{-}24h_{\min}
\end{eqnarray*}
where $h_{\min}$ is defined to be the \textit{minimum} of~0 and $h$. 
(Recall that $h$ is the conformal weight of the irreducible $V$-module $M$.) 
By (1.3) in~\cite{DM1}, a strongly regular VOA necessarily satisfies $\tilde{c}{>}0$. In the present situation we have
\begin{lem}\label{lemtildec} 
Exactly one of ${-}24a$, ${-}24b$ is \textit{positive}, and this is equal to~$\tilde{c}$.
\end{lem}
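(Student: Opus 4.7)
The plan is to translate everything into the quantities $c$ and $c-24h$, use the already-proved relation $a+b=1/6$ to pin down their sum, and then use positivity of $\widetilde{c}$ to force exactly one of them to be positive.

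First I would observe that the definitions $a=-c/24$ and $b=h-c/24$ give immediately
\begin{equation*}
-24a = c, \qquad -24b = c-24h.
\end{equation*}
By Lemma~\ref{lemindroots} we have $a+b = 1/6$, so
\begin{equation*}
(-24a) + (-24b) = -24(a+b) = -4.
\end{equation*}
In particular the two quantities $-24a$ and $-24b$ have sum $-4<0$, so at most one of them can be positive.

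Next I would split into two cases according to the sign of $h$. If $h\geq 0$ then $h_{\min}=0$, hence $\widetilde{c}=c$; by the positivity of $\widetilde{c}$ recalled from \cite{DM1} this gives $c>0$, so $-24a=c=\widetilde{c}>0$, and then $-24b=-4-c<0$. If instead $h<0$ then $h_{\min}=h$, so $\widetilde{c}=c-24h=-24b$; positivity of $\widetilde{c}$ now yields $-24b=\widetilde{c}>0$, and consequently $-24a=-4-(-24b)=-4-\widetilde{c}<0$. In either case exactly one of $-24a,-24b$ is positive and it equals $\widetilde{c}$.

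I do not expect any serious obstacle: the only nontrivial input is the sum relation $a+b=1/6$ from Lemma~\ref{lemindroots} and the DM1 positivity $\widetilde{c}>0$, both of which are available. The one point that deserves a brief comment is that the two cases $h\geq 0$ and $h<0$ are genuinely exhaustive and each produces a consistent sign pattern; the case $h=0$ would force $-24a=-24b$, contradicting that $Z_V$ and $Z_M$ are linearly independent solutions of a second-order MLDE with distinct indicial roots, so it does not actually arise. Beyond this, the lemma is a direct bookkeeping consequence of $a+b=1/6$ together with the sign of $\widetilde{c}$.
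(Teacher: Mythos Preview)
Your proof is correct and follows the same case split on the sign of $h$ as the paper. The one difference is in establishing that exactly one of $-24a$, $-24b$ is positive: the paper simply observes this by inspecting Table~\ref{Table:smca} (the list of surviving values of $a$ and $b$), whereas you deduce it directly from the relation $(-24a)+(-24b)=-4$ coming from Lemma~\ref{lemindroots} together with $\widetilde{c}>0$. Your version is thus self-contained and does not depend on the prior case-by-case tabulation; it would work in any situation where $a+b=1/6$ and $\widetilde{c}>0$ hold, not only for the finite list already compiled. Your side remark about $h=0$ is harmless but unnecessary, since your argument in the case $h\geq 0$ already yields $-24a>0>-24b$, which in particular rules out $a=b$.
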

\begin{proof}
First observe from Table~\ref{Table:smca} that exactly one of $a$, $b$ is negative.\  If $h{\geq}0$ then $h_{\min}{=}0$ 
and then $\tilde{c}{=}c{=}{-}24a$.\  On the other hand, if $h{<}0$ then $h_{\min}{=}h$ and furthermore \\
$\tilde{c}{=}c{-}24h{=}c{-}24(b{+}c/24){=}{-}24b$.\  The Lemma follows.
\end{proof}

In the following omnibus Theorem we collect some further results, gleaned from~\cite{DM1}, \cite{DM2} and~ \cite{M2},
having to do with the effective central charge $\tilde{c}$ in an arbitrary strongly regular VOA.
\begin{thm}\label{thmomni} 
Assume that $V=\CC\mathbf{1}{\oplus}V_1{\oplus}\cdots$ is a~strongly regular VOA of central charge~$c$.
Let $\ell$ be the \textit{Lie rank} of the Lie algebra $V_1$ \textup{(}dimension of a maximal abelian subalgebra of $V_1$\textup{)}.
Then the following hold\textup{\,:}\\
\textup{(a)}~The \textit{Lie algebra} $V_1$ is \textit{reductive}.\\
\textup{(b)}~$\ell\leq\tilde{c}$\,.\\
\textup{(c)}~If $c{=}\ell{=}\tilde{c}$ then $V$ is \textit{isomorphic} to a lattice theory $V_{\Lambda}$ for some even, 
positive-definite lattice $\Lambda$ of rank $\ell$.\\
\textup{(d)}~Suppose that $\tilde{c}{<}\ell{+}1$.\ Then $c{=}\ell{+}c_{p, q}$ where $c_{p, q}$ is a central charge 
in the \textit{Virasoro discrete series}.\\
\textup{(e)}~If $L{\subseteq} V_1$ is a~\textit{Levi factor} of $V_1$ then the subVOA $U{:=}\langle L\rangle{\subseteq}V$ generated by $L$ is isomorphic to a~tensor product of affine algebras $L(\frak{g}_i, k_i)$ at \textit{positive integral} levels $k_i$. 
\textup{(}$U$ and $V$ may have \textit{different} conformal vectors.\textup{)}\\
\textup{(f)}~If $\tilde{c}{=}2/5$ then $V$ is isomorphic to the Virasoro (Yang-Lee) model with~$c{=}{-}22/5$.
\end{thm}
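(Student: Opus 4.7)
The statement is an omnibus of results drawn from \cite{DM1}, \cite{DM2}, \cite{M2}, so my strategy is to identify the underlying mechanism for each item and to cite or rederive it from its original source. The unifying object is the rank~$\ell$ Heisenberg subVOA $H \subseteq V$ generated by a Cartan subalgebra of $V_1$, together with its commutant $H^c$ inside $V$. Most of the statements can be read off from central-charge bookkeeping and from the decomposition of $V$ as an $H \otimes H^c$-module.

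For part~(a), I would invoke the Dong--Mason reductivity theorem for $V_1$: the assignment $(u,v)\vac := u(1)v$ defines a nondegenerate invariant bilinear form on $V_1$, and combining this with the action of the solvable radical of $V_1$ on a simple $V$-module rules out a nontrivial radical. For part~(b), since $H$ is unitary of central charge $\ell$ and its commutant $H^c$ has nonnegative effective central charge, one obtains $\ell \le \tilde c$. For part~(c), equality $c = \ell = \tilde c$ forces $H^c$ to be trivial, so $V$ is a simple-current extension of $H$; the Dong--Mason classification then shows $V \cong V_\Lambda$ for a positive-definite even lattice $\Lambda$ of rank $\ell$. For part~(e), once reductivity is in hand, the subVOA generated by a Levi factor $L = \bigoplus_i \mathfrak{g}_i$ is a tensor product of simple affine VOAs $L(\mathfrak{g}_i, k_i)$, with positivity and integrality of the levels $k_i$ following from the strong regularity of $V$ via Kac's unitarity criterion applied to each $\widehat{\mathfrak{g}}_i$-summand.

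The remaining parts rely on the classification of rational Virasoro VOAs. Under the hypothesis $\tilde c < \ell + 1$ of part~(d), the coset $H^c$ has effective central charge strictly less than~$1$, and its rational structure then forces it to be one of the Virasoro minimal models $\mathsf{Vir}_{c_{p,q}}$, yielding $c = \ell + c_{p,q}$. Part~(f) is the special case $\tilde c = 2/5$: by~(b) this forces $\ell = 0$, hence $V_1 = 0$, so $H$ is trivial and (d) applies to pin down $c_{p,q} = c_{2,5} = -22/5$; a rigidity argument from \cite{DM2} then identifies $V$ itself with $\mathsf{Vir}_{c_{2,5}}$. The main obstacle throughout is the bookkeeping of effective central charges under coset decompositions and the verification that the coset $H^c$ is itself strongly regular so that the minimal-model classification genuinely applies; these technical points are precisely what is handled in \cite{DM1} and \cite{DM2}.
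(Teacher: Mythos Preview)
Your proposal is correct and matches the paper's own proof, which is purely a citation proof pointing parts (a)--(c) to \cite{DM1}, part (d) to Theorem~7 of \cite{M2}, part (e) to \cite{DM2} together with Theorem~3 of \cite{M2}, and part (f) to Corollary~9 of \cite{M2}; your sketch of the underlying Heisenberg/commutant mechanism is exactly what those references establish. One small correction: the final identification in (f) is drawn from \cite{M2} rather than \cite{DM2}.
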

\begin{proof} Parts (a), (b) and (c) correspond to  Theorems~1.1, 1.2 and 1.3 respectively of~\cite{DM1}.\ Part (d) is an~immediate consequence of Theorem~7 of~\cite{M2}.\ Part (e) follows from Theorem~1.1 of~\cite{DM2} and Theorem~3 of~\cite{M2}, 
while (f) is a~restatement of Corollary~9 of~\cite{M2}.
\end{proof}
 
\begin{table}[htbp]
\begin{center}
\begin{tabular}{c|cccccccccccc}
$\ell$&$1$&$2$&$3$&$4$&$5$&$6$&$7$&$8$&$9$&$10$\\
\hline
$A_\ell$&$3$&$8$&$15$&$24$&$35$&$48$&$63$&$80$&$99$&$120$\\
$B_\ell$&$3$&$10$&$21$&$36$&$55$&$78$&$105$&$136$&$171$&$210$\\
$C_\ell$&$3$&$10$&$21$&$36$&$55$&$78$&$105$&$136$&$171$&$210$\\
$D_\ell$&$3$&$6$&$15$&$28$&$45$&$66$&$ 91$&$120$&$15$3&$190$\\
\hline
$F_4$&$52$&&&&&&&&\\
$G_2$&$14$&&&&&&&&\\
$E_6$&$78$&&&&&&&&&\\
$E_7$&$133$&&&&&&&&&\\
$E_8$&$248$&&&&&&&&&\\
\end{tabular}
\end{center}
\caption{Dimensions of simple Lie algebras}
\label{Table:DIML}
\end{table}

\subsection{Proof of Main Theorem 1}\label{SSPMT}
In this Subsection we complete the proof of  Main Theorem~\ref{thm1}.\ This involves a~more detailed consideration 
of the possibilities listed in Table~\ref{Table:residual} based on the results of Theorem~\ref{thmomni}.\ The list of low-dimensional simple Lie algebras in Table \ref{Table:DIML} is also useful.\
First we deal with the 8~known cases.

\medskip\noindent
\textbf{Case $c{=}1$.}\
From Table~\ref{Table:residual} we have $m{=}3, c{=}\tilde{c}{=}1$.
By Theorem~\ref{thmomni}(a) and (b) $V_1$ is a~reductive Lie algebra of dimension $m{=}3$ and Lie rank $\ell{\leq}1$.
Thus we must have $V_1\cong\frak{sl}_2$, so that $\ell{=}1$. Now Theorem~\ref{thmomni}(c) applies and establishes 
that $V{\cong} V_{A_1}{\cong} L(A_1, 1)$.

\medskip\noindent
\textbf{Case $c{=}2$.}\
This is similar to the previous Case. We have $m{=}8$ and $c{=}\tilde{c}{=}2$, so $V_1$ is a~reductive Lie algebra of dimension~8 
and  $\ell{\leq}2$. The only possibility is $V_1{\cong}\frak{sl}_3$, 
and we can conclude with Theorem~\ref{thmomni}(c) once more that $V{\cong}V_{A_2}{\cong} L(A_2, 1)$.

\medskip\noindent
\textbf{Case $c{=}4$.}\
Here, $V_1$ is a~reductive Lie algebra of dimension~28 and Lie rank $\ell{\leq}\tilde{c}{=}c{=}4$.
By the Cartan-Killing classification of semisimple Lie algebras one checks that the only possibility is either 
$V_1{\cong} \frak{so}_8$ or $V_1{\cong}G_2\oplus G_2$, and by Theorem~\ref{thmomni}(c) 
we obtain $V{\cong} L_{D_4} {\cong}  L(D_4, 1)$.

\medskip\noindent
\textbf{Case $c{=}6$.}\
Here, $V_1$ is a reductive Lie algebra of dimension~78 and  Lie rank $\ell{\leq}\tilde{c}{=}c{=}6$.
By the Cartan-Killing classification of semisimple Lie algebras the possibilities are $V_1{\cong}\frak{e}_6, \frak{sp}_{12}$ 
or $\frak{so}_{13}$.\ In each  case we have $\tilde{c}{=}c{=}\ell$ and by Theorem~\ref{thmomni}(c) 
we obtain $V\cong L_{E_6}\cong L(E_6, 1)$.

\medskip\noindent
\textbf{Case $c{=}7$.}
Here, $V_1$ is a~reductive Lie algebra of dimension 133 and Lie rank $\ell{\leq}\tilde{c}=c=7$.
By the Cartan-Killing classification of semisimple Lie algebras the only possibility is $V_1{\cong} \frak{e}_7$ 
and by Theorem~\ref{thmomni}~(c) we obtain $V{\cong} L_{E_7}{\cong} L(E_7, 1)$.

\medskip
This deals with the cases of affine algebras with simply-laced root systems.\ In other cases the argument is a bit more complicated:

\medskip\noindent
\textbf{Case $c{=}14/5$.}
Here, $V_1$ is a~reductive Lie algebra of dimension~14 and Lie rank $\ell{\leq}\tilde{c}=c=14/5$.
Thus $\ell{\leq}2$ and by the Cartan-Killing classification of semisimple Lie algebras the only possibility is $V_1{\cong}\frak{g}_2$. 
Now from Tables~\ref{Table:Expansion} and~\ref{Table:residual}, the character $Z_V(\tau)$ is uniquely determined from the hypotheses of the main Theorem together with the numerical restrictions $c{=}14/5$ and $\dim V_1{=}14$.
Because the affine algebra $L(G_2, 1)$ also satisfies these conditions then it follows $f_1{=}Z_V(\tau){=}q^{-7/60}+\cdots$ 
as given in Table~\ref{Table:Expansion} is exactly the graded character of $L(G_2, 1)$.

\medskip
On the other hand, if $U{:=}\langle V_1\rangle$ is as in the statement of Theorem~\ref{thmomni}(e) then that result shows that
$U{\cong} L(G_2, k)$ for some positive integer~$k$.\  It follows from the last paragraph that the graded character of $L(G_2, k)$ 
is \textit{majorized} by that of $L(G_2, 1)$ in the following sense:\ \textit{every} coefficient in the graded character of $L(G_2, k)$ is \textit{no greater} than the corresponding coefficient in the graded character of $L(G_2, 1)$.

\medskip
Now $L(G_2, k)$ is constructed as a~graded quotient of the universal VOA $M(G_2,k)$ associated with the Lie algebra $G_2$, 
and the (unique) maximal submodule of $M(G_2,k)$ is generated by $e_{\theta}(-1)^{k+1}\mathbf{1}$, where $e_{\theta}$ 
is the longest root (cf.~\cite[Chapter 6.6]{LL}).\ Because the graded dimension of $L(G_2, k)$ is \textit{majorized} by that of $L(G_2, 1)$ in the sense of the previous paragraph, it follows that~$k{=}1$.

\medskip\noindent
\textbf{Case $c{=}26/5$.}\
Here, $V_1$ is a~reductive Lie algebra of dimension 52 and Lie rank $\ell{\leq}\tilde{c}{=}c{=}26/5$.\ Thus $\ell{\leq}5$ 
and by the Cartan-Killing classification of semisimple Lie algebras the only possibility is $V_1{\cong}\frak{f}_4$.\
The rest of the argument proving that $V{\cong} L(F_4, 1)$ is completely parallel to that of the previous case, except that of course we replace $G_2$ with $F_4$.

\medskip
The remaining entry in Table~\ref{Table:residual} corresponding to a known VOA is the following:

\medskip\noindent
\textbf{Case $c{=}{-}22/5$.}\
In this Case we have $\tilde{c}{=}2/5$ from Table~\ref{Table:smca2}, therefore by Theorem~\ref{thmomni}~(f), $V$ is the Virasoro
VOA in the discrete series with $c{=}{-}22/5$.\  Alternatively, we have $\dim V_1{=}0$ from Table~\ref{Table:smca2}, 
whence the identification of $V$ follows from the characterization of the same Virasoro algebra given in~\cite{ANS}.

\medskip
Next we show by arguments similar to those already used that the cases with $c{=}2/5,\, 38/5,\, {-}5$ \textit{do not} correspond to strongly regular VOAS.

\medskip\noindent
\textbf{Case $c{=}2/5$.}\
Here, Table~\ref{Table:residual} informs us that $\tilde{c}{=}2/5$ and $\dim V_1{=}1$.  It follows that $V{=}\CC$ and therefore 
$\ell{=}1{>}\tilde{c}$, contradicting Theorem~\ref{thmomni}(b).  Alternatively, we may apply Theorem~\ref{thmomni}~(e) to see 
that $V$ is the Yang-Lee model with $c{=}{-}22/5$, a~contradiction.

\medskip\noindent
\textbf{Case $c{=}38/5$.}\
From Table~\ref{Table:residual} and various parts of Theorem~\ref{thmomni}, we find that $V_1$ is a~reductive Lie algebra 
of dimension~190 and Lie rank $\ell{\leq}7$.  But there is \textit{no} such Lie algebra, as we can see using Table~\ref{Table:DIML}.
So this Case cannot occur.

\medskip\noindent
\textbf{Case $c{=}{-}5$.}\
From Table~\ref{Table:residual}, $\tilde{c}{=}1$ and $\dim V_1{=}1$, so $\ell{=}1$.  So Theorem~\ref{thmomni}~(d) applies, 
and tells us that $c{=}1{+}c_{p, q}$. This is impossible because $c{-}1{=}{-}6$ is \textit{never} equal to any $c_{p, q}$.

\section{Proof of Main Theorem 2}\label{SPMT2}
In this Section we give the proof of Main Theorem 2.\ Essentially, we must handle the three remaining  cases, where $c=-6,\, -8$ 
and $-10$.\ We will show that they cannot occur under the assumption that $V$ and $M$ are the only simple $V$-modules.\
The methods employed in Section~\ref{SSPMT} are less effective when dealing with these  cases.\
Instead we will use the modularity Theorem~\ref{thmmodular} coupled with the fact~\cite{Hu} that the $S$-matrix is \textit{symmetric}.

\medskip\noindent
\textbf{Case $c=-6$.}\
We will need the explicit identification of $f_1$ and $f_2$ as modular functions of level~12. 
(The level is the least common  of the denominators of $a=1/4$ and $b=-1/12$.) In fact, we have
\begin{eqnarray*}
f_1(\tau){=}\Delta_3(\tau)/\eta(\tau)^2,\ \ f_2(\tau){=}I_3(\tau)/\eta(\tau)^2,
\end{eqnarray*}
where
\begin{equation*}
\Delta_3(\tau){:=}\eta(3\tau)^3/\eta(\tau),\quad I_3(\tau){:=}1+6\sum_{n=1}^{\infty}\sum_{d|n} \left(\frac{d}{3}\right)q^n,
\end{equation*}
and $\left(\tfrac{d}{3}\right)$ is the Legendre symbol.\ (We use the provisional notation $\Delta_3$ and $I_3$ as
there is no standard way to denote the corresponding modular forms.)\ This can be checked in various ways:
(a) show that the indicated modular forms solve the MLDE (\ref{ord2});
(b) check that $\Delta_3(\tau)$ and  $I_3(\tau)$ are holomorphic modular forms of weight~1 and level~12 and that the first few terms 
of their q-expansions agree with those of $\eta(\tau)^2f_1(\tau)$ and $\eta(\tau)^2f_2(\tau)$ respectively.

\medskip
Using standard transformation laws, we find that
\begin{equation*}
\begin{pmatrix} f_1\\ f_2\end{pmatrix}\Big|_0 S=
-\frac{1}{\sqrt{3}}\begin{pmatrix}1&-\frac{1}{3}\\
6&1\end{pmatrix}
\begin{pmatrix} f_1\\ f_2\end{pmatrix}.
\end{equation*}

Thus the $S$-matrix for $V$ is visibly \textit{not} symmetric, and therefore $V$ cannot exist.

\medskip\noindent
\textbf{Case 2 $c{=}{-}8$.}\
We proceed as in Case~1.\  We find that
\begin{equation*}
f_1= \frac{\eta_(2\tau)^8}{\eta(\tau)^8},\quad f_2=\frac{2E_2(2\tau)^2-E_2(\tau)}{\eta(\tau)^4}
\end{equation*}
and 
\begin{equation*}
\left.\begin{pmatrix} f_1\\ f_2\end{pmatrix}\right|_0S=\frac{1}{2}
\begin{pmatrix}
-1&\frac{1}{8}\\
24&1
\end{pmatrix}
\begin{pmatrix} f_1\\ f_2\end{pmatrix}\,.
\end{equation*}
We see that the $S$-matrix is not symmetric.

\medskip\noindent
\textbf{Case  $c{=}{-}10$.}\
Proceed as in Cases~1 and~2.  We find that
\begin{equation*}
f_1=\frac{I_3(\tau)^3\Delta_3(\tau)^3}{\eta(q)^6}\,,\quad f_2=\frac{I_3(\tau)^3+54\Delta_3(\tau)^3}{\eta(q)^6}
\end{equation*}
and 
\begin{equation*}
\left.\begin{pmatrix} f_1\\ f_2\end{pmatrix}\right|_0S=\tfrac{1}{\sqrt{3}}
\begin{pmatrix}
-1&-\tfrac{1}{27}\\
54&1
\end{pmatrix}
\begin{pmatrix} f_1\\ f_2\end{pmatrix}\,.
\end{equation*}
Once again, the $S$-matrix is not symmetric.

\medskip
This completes the proof that the three cases where $c{=}{-}6,\,{-}8,\,{-}10$ \textit{cannot} occur.\
Now our Main Theorem 2 follows from Main Theorem 1.


\begin{thebibliography}{abcd}
\bibitem{ANS} Y.~Arike, K.~Nagatomo and Y.~Sakai, 
Characterization of the simple Virasoro vertex operator algebras
with $2$ and $3$-dimensional space of characters, \textit{Contemp.~Mathematics} Vol.~\textbf{695}, AMS, Providence, 2017.

\bibitem{DM1} 
C.~Dong and G.~Mason, 
Rational vertex operator algebras and the effective central charge, IMRN
\textbf{56}, 2989--3008 (2004).

\bibitem{DM2}
C.~Dong and G.~Mason, 
Integrability of $C_2$-cofinite vertex operator algebras, IMRN Article Id \textbf{80468}, 1--15 (2006).

\bibitem{DM3}
C.~Dong and G.~Mason, 
On quantum Galois Theory, Duke Math.~J. \textbf{86}(1997), 305--321.

\bibitem{DLM} 
C.~Dong, H.~Li and G.~Mason, 
Modular-invariance of trace functions in orbifold theory and generalized moonshine,
Comm.~Math.~Phys.\textbf{214}, 1--56 (2000).

\bibitem{FM1} 
C.~Franc and G.~Mason, 
Fourier coefficients of vector-valued modular forms of dimension~2, Canad.~Math.~Bull. \textbf{57} No.~3, 485--494 (2014).

\bibitem{FM2} C.~Franc and G.~Mason, Hypergeometric series, modular linear differential equations, and vector-valued
modular forms, Ramanujan Journal \textbf{41} No. 1-3, 233-267 (2016).

\bibitem{Hu} Y.\ Z.\ Huang, Vertex operator algebras and the Verlinde Conjecture, \textit{Comm.~Contemp.~Math.} 103-154
(2008).

\bibitem{L}
S.~Lang, \textit{Introduction to Modular Forms}, Springer, Grundlehren \textbf{222}, Heidelberg, 1976.

\bibitem{LL} 
J.~Lepowsky and H.~Li, Introduction to Vertex Operator Algebras and their Representations,
\textit{Progress in Mathematics} \textbf{227}, Birkh\"{a}user, Boston (2004).


\bibitem{M1}
G.~Mason, 
2-Dimensional vector-valued modular forms, Ramanujan J.~\textbf{17}, 405--427 (2008).

\bibitem{M2} G.~Mason, 
Lattice subalgebras of strongly regular vertex operator algebras, in \textit{Conformal Field Theory, Automorphic Forms and Related Topics}, W.~Kohnen and R.~Weissauer (Eds.), Contributions in
Mathematical and Computational Sciences \textbf{8}, Springer, Heidelberg, 2014.

\bibitem{M3} 
G.~Mason, 
Vector-valued modular forms and linear differential operators,
IJNT Vol.~\textbf{3} No.~3 (2007). 

\bibitem{M} 
M.~ Matsuda, 
\textit{Lectures on Algebraic Solutions of Hypergeometric Equations}, Lectures in Mathematics,
Kinokuniya Co.~Ltd.  Tokyo, Japan, 1985.

\bibitem{MMS} 
S.~Mathur, S.~Mukhi and A.~Sen, 
On the Classification of Rational Conformal Field Theories,
Phys.~Lett.~B, Vol.~\textbf{213}, No.~3 (1988).

\bibitem{TV} 
M.~Tuite and H.~D.~Van, On (exceptional) vertex operator (super) algebras, 
in \textit{Developments and Retrospectives in Lie Theory - Algebraic Methods}, Developments in Mathematics, 
G.~ Mason, I.~Penkov, J.~Wolf eds.,
Springer, Heidelberg (2014), 351--384.

\bibitem{DV}
R.~ Dijkgraaf, E.~Verlinde, 
Modular invariance and the fusion algebra,
Conformal field theories and related topics (Annecy-le-Vieux, 1988). 
Nuclear~Phys.~B Proc.~Suppl.~5B, 87--97 (1988),

\bibitem{Z} 
Y.~Zhu,
 Modular invariance of characters of vertex operator algebras, JAMS.  \textbf{9}, No.~1 (1996), 237--302.
\end{thebibliography}
\end{document}